\theoremstyle{plain}
\numberwithin{equation}{section}
\theoremstyle{plain}
\newtheorem{corollary}[equation]{Corollary}
\newtheorem{theorem}[equation]{Theorem}
\newtheorem{lemma}[equation]{Lemma}
\theoremstyle{definition}
\newtheorem{example}[equation]{Example}
\newtheorem{remark}[equation]{Remark}
\newcommand{\nc}{\newcommand}
\newcommand{\D}{{\mathbb D}}
\newcommand{\T}{{\mathbb T}} 
\newcommand{\ADC}{\operatorname{ADC}}
\newcommand{\vp}{\varphi}
\nc{\bea}{\begin{eqnarray}}
\nc{\eea}{\end{eqnarray}}
\nc{\beqa}{\begin{eqnarray*}}
\nc{\eeqa}{\end{eqnarray*}}
\nc{\Hi}{H^{\infty}}
\nc{\loi}{\ell^{\infty}}
\nc{\NL}{N^+\vert \Lambda}
\nc{\hf}{{\mathcal H}_{\phi}}
\nc{\liL}{\lambda\in\Lambda}
\nc{\nn}{\nonumber}
\nc{\dst}{\displaystyle}
\newenvironment{proof*}{\vskip 2mm\noindent {}}{$\blacksquare$ \vskip 2mm}
\numberwithin{equation}{section}
\renewcommand{\Re}{\mbox{Re}}
\title{Truncated Toeplitz operators and boundary values in 
nearly invariant subspaces}
\author{Andreas Hartmann \& William T.\ Ross}
\address{Institut de Math\'ematiques de Bordeaux,
Universit\'e Bordeaux I, 351 cours de la Lib\'eration,
33405 Talence, France}
\address{Department of Mathematics, University of Richmond, VA 23173, USA}
\thanks{This work has been done while the first named author was staying at
University of Richmond as the Gaines chair in mathematics. He would like to thank that institution for the hospitality
and support during his stay. This
author is also partially supported by ANR FRAB}
\email{hartmann@math.u-bordeaux.fr, wross@richmond.edu}
\date{\today}
\keywords{truncated Toeplitz operator, kernel function, nearly invariant subspace, non-tangential limits}
\subjclass{30B30, 47B32, 47B35}
\begin{document}

\bibliographystyle{amsalpha}

\begin{abstract} We consider truncated Toeplitz operator on nearly invariant
subspaces of the Hardy space $H^2$. Of some importance in this context is the boundary behavior of the
functions in these spaces which we will discuss in some detail.
\end{abstract}

\maketitle

\section{Introduction}

If $H^2$ is the classical Hardy space \cite{Duren}, we say a (closed) subspace $M \subset H^2$ is \emph{nearly invariant} when 
$$f \in M, f(0) = 0 \Rightarrow \frac{f}{z} \in M.$$ These subspaces
have been completely characterized in \cite{Hi88, Sa88} and continued
to be studied in \cite{AR96, HSS04, MP05, KN06, AK08, CCP}. 
In this paper, we will examine certain properties of nearly invariant subspaces. 
We aim to accomplish three things.

 First, in Theorem \ref{prop1.2} we observe that much of what we already know about truncated Toeplitz operators on model spaces \cite{Sa07} can be transferred, via a unitary operator to be introduced
below, \emph{mutatis mutandis} to truncated operators on nearly invariant subspaces. 

Secondly, in Theorem \ref{MNTL} we will show that every function in a nearly invariant subspace $M$ has a finite non-tangential limit at $|\zeta| = 1$ if and only if (i) $g$ has a finite non-tangential limit at $\zeta$ and (ii) the reproducing kernel functions for $M$ are uniformly norm bounded in Stolz regions with vertex at $\zeta$. This parallels a result by
Ahern and Clark in model spaces.

Third, in order to better understand the self-adjoint rank-one truncated Toeplitz operators
on nearly invariant subspaces, we will discuss the non-tangential limits of functions 
in these spaces. 
It turns out (Theorem \ref{thmdicho}) 
there 
is a kind of dichotomy: 
If every function in a nearly invariant subspace $M$ has a boundary limit at a fixed point $|\zeta| = 1$
then either $\zeta$ is a point where {\it every} function in $\frac{1}{g} M$ (where $g$ is the extremal function for $M$ - see definition below) has a finite non-tangential limit or {\it every} function in $M$ has non-tangential limit 0 at $\zeta$.

A word concerning numbering in this paper: in each section, 
we have numbered theorems,
propositions, lemmas, corollaries {\bf and} equations consecutively.


\section{Preliminaries}

If $H^2$ is the Hardy space of the open unit disk $\D$ (with the usual norm $\|\cdot\|$) and $I$ is inner, let $K_I = H^2 \ominus I H^2$ be the well-studied model space \cite{Niktr}. Note that $H^2$, as well as $K_{I}$, are regarded, via non-tangential boundary values on $\T := \partial \D$, as subspaces of $L^2 := L^2(\T, d\theta/2\pi)$ \cite{Duren}.  It is well known that $H^2$ is a reproducing kernel Hilbert space with kernel 
$$k_{\lambda}(z) := \frac{1}{1 - \overline{\lambda} z},$$ as is $K_I$ with kernel
$$k^{I}_{\lambda}(z) := \frac{1 - \overline{I(\lambda)} I(z)}{1 - \overline{\lambda} z}.$$ Note that
$k_{\lambda}^{I}$ is bounded and that finite linear combinations of 
them form a dense subset of $K_I$. Also note that if $P_I$ is the orthogonal projection of $L^2$ onto $K_I$, then 
$$(P_{I} f)(\lambda) = \langle f, k_{\lambda}^{I} \rangle, \quad \lambda \in \D.$$
Note that for every $\lambda\in\D$ this formula extends to $f\in L^1
=L^1(\T,d\theta/2\pi)$ so that we can define $P_I$ for
$L^1$-functions (which in general does not give an $H^1$-function).

By a theorem of Ahern and Clark \cite{AC70}, every function $f \in K_I$ has a finite non-tangential limit at a boundary point $\zeta \in \T$, i.e., 
$$f(\zeta) := \angle \lim_{\lambda \to \zeta} f(\lambda)$$
exists for every  $f \in K_I$,
if and only if $\zeta \in \ADC(I)$. Here $\zeta \in \ADC(I)$ means that 
$I$ has a finite angular derivative in the sense of Carath\'{e}odory, meaning
$$\angle \lim_{\lambda \to \zeta} I(\lambda)  = \eta \in \T$$ and 
$$\angle \lim_{\lambda \to \zeta} I'(\lambda) \mbox{\; \; exists}.$$
Moreover, whenever $\zeta \in \ADC(I)$, the linear functional $f \mapsto f(\zeta)$ is continuous on $K_I$ giving us a kernel function $k_{\zeta}^{I}$ for $K_I$ at the boundary point $\zeta$. That is to say
$$f(\zeta) = \langle f, k_{\zeta}^{I} \rangle \quad \forall  f \in K_I.$$

Sarason \cite{Sa07} began a study, taken up by others, of the \emph{truncated Toeplitz operators} on $K_I$. These operators are defined as follows: For $\vp \in L^2$, define the truncated Toeplitz operator $A_{\vp}$ densely on the bounded $f \in K_I$ by $$A_{\vp} f  = P_I (\vp f).$$ Let $\mathcal{T}_I$ denote the $A_{\vp}$ which extend to be bounded operators on $K_I$. Certainly when $\vp$ is bounded, then $A_{\vp} \in \mathcal{T}_{I}$, but there are unbounded $\vp$ which yield $A_{\vp} \in \mathcal{T}_{I}$. Moreover, there are bounded truncated Toeplitz operators which can not be represented by a bounded symbol \cite{BCFMT}. 
Much is known about these operators (see the Sarason paper \cite{Sa07} for a detailed discussion) but we list a few interesting facts below:
\begin{enumerate}
\item $\mathcal{T}_{I}$ is a weakly closed linear subspace of operators on $K_I$. 
\item $A_{\vp} \equiv 0$ if and only if $ \vp \in I H^2 + \overline{I H^2}$. 
\item The operator $C f := \overline{z f} I$ (considered as boundary functions on $\T$) defines an isometric, anti-linear, involution on $K_I$ for which $C A_{\vp} C = A_{\vp}^{*} = A_{\overline{\vp}}$. The operator $C$ is called a conjugation. This makes $\mathcal{T}_{I}$ a collection of complex symmetric operators \cite{GP06, GP07}. 
\item A bounded operator $A$ on $K_I$ belongs to $\mathcal{T}_I$ if and only if there are functions $\vp_1, \vp_2 \in K_I$ so that 
$$A - A_z A A_{z}^{*} = (\vp_1 \otimes k^{I}_{0}) + (k^{I}_{0} \otimes \vp_2).$$ 
In the above, we use the notation $f \otimes g$ for the rank-one operator 
$$(f \otimes g) (h) := \langle h, g \rangle f.$$
\item An operator $A$ on $K_I$ is a 
rank-one truncated Toeplitz operator on if and only if it can be written as constant multiple of one of the
following three types 
\[
 k_{\lambda}^I\otimes Ck_{\lambda}^I,
 \quad C k_{\lambda}^I\otimes k_{\lambda}^I,
 \quad k_{\zeta}^I\otimes k_{\zeta}^I
\]
where $\lambda\in\D$, $\zeta\in \ADC(I)$. The two first ones are the non-selfadjoint
and the last ones are the selfadjoint truncated Toeplitz operators.
\item Sedlock \cite{Sed} (see also \cite{GRW})  showed that every maximal algebra in $\mathcal{T}_{I}$ can be written as the commutant of a generalization of the Clark unitary operator. 
\item In \cite{TTOSIUES} they show, for two inner functions $I_1$ and $I_2$, that $\mathcal{T}_{I_1}$ is spatially isomorphic to $\mathcal{T}_{I_2}$ if and only if either $I_1 = \psi \circ I_2 \circ \phi$ or $I_1 = \psi \circ \overline{I_2(\overline{z})} \circ \phi$ for some disk automorphisms $\phi, \psi$. 
\end{enumerate}       

As we have already mentioned,
a \emph{nearly invariant} subspace $M$ is a closed subspace of 
$H^2$ such that 
$$f \in M, f(0) = 0 \Rightarrow \frac{f}{z} \in M.$$
A result of Hitt \cite{Hi88} says that if $g$ (called the extremal function for $M$) is the unique solution to the extremal problem
\begin{equation} \label{extg}
 \sup\{\Re g(0):g\in M, \|g\| =1\},
\end{equation}
then there is an inner function $I$ so that 
$$M = g K_I$$ and moreover, the map 
$$U_{g}: K_I \to M, \quad U_{g} f = g f,$$ is isometric. 

 Note that necessarily we have the condition $I(0)=0$ since $g\in
 M=gK_I$ and so $1\in K_I$. If one were to choose any $g$ and any
 inner $I$, then $g K_I$ is, in general, not even a subspace of $H^2$. 
Sarason shows in
 \cite{Sa88} that when $I(0) = 0$, every isometric multiplier on $K_I$ takes the form
\begin{equation} \label{Sar-g}
 g=\frac{a}{1-Ib},
\end{equation}
where $a$ and $b$ are in the unit ball of $\Hi$ with $|a|^2+|b|^2=1$ a.e.\ on $\T$. As a consequence, $g K_I$ is a (closed) nearly invariant subspace of $H^2$ with extremal function $g$ as in \eqref{extg}.

\begin{remark}
From now on, whenever we speak of a nearly invariant subspace $M = g K_I$, we will always assume that $I(0) = 0$ and that $g$ is extremal for $M$ (as in \eqref{extg}) and of the form in \eqref{Sar-g}. We will say that $g K_I$ is a nearly invariant subspace with extremal function $g$ and suitable inner function $I$ with $I(0) =0$.
\end{remark}

Our first step towards defining a truncated Toeplitz operator on the nearly invariant subspace $M = g K_I$, as Sarason did for $K_I$, is to understand $P_M$, the orthogonal projection of $L^2$ onto $M$. 

\begin{lemma}\label{Lemma1.1}
Let $M=gK_I$ be a nearly invariant subspace with extremal function $g$ and associated
inner function $I$, $I(0)=0$. Then
\[
 P_M f=gP_I(\overline{g} f), \quad f \in M.
\]
\end{lemma}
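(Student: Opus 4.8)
The plan is to recognize the projection $P_M$ as $U_g U_g^{*}$ and then identify the adjoint $U_g^{*}$ explicitly. Hitt's theorem \cite{Hi88} furnishes the isometric multiplier $U_g\colon K_I \to M$, $U_g h = gh$, which maps $K_I$ \emph{onto} $M$. Viewing $U_g$ as a bounded operator into $L^2$, its range is exactly $M$, so $U_g^{*} U_g = I_{K_I}$ while $U_g U_g^{*}$ is the orthogonal projection of $L^2$ onto that range, i.e.\ $U_g U_g^{*} = P_M$. (Indeed $U_g U_g^{*}$ is self-adjoint, idempotent since $U_g U_g^{*} U_g U_g^{*} = U_g (U_g^{*} U_g) U_g^{*} = U_g U_g^{*}$, and has range $U_g(K_I) = M$.) Thus everything reduces to computing $U_g^{*}$.

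Next I would compute $U_g^{*}$ by testing against reproducing kernels. For $f \in L^2$ and $h \in K_I$ one has $\langle h, U_g^{*} f\rangle = \langle U_g h, f\rangle = \langle g h, f\rangle = \langle h, \overline{g} f\rangle$, so $U_g^{*} f$ is the element of $K_I$ representing the linear functional $h \mapsto \langle h, \overline{g} f\rangle$. Evaluating at $\lambda \in \D$ through the kernel $k_{\lambda}^{I}$ gives $(U_g^{*} f)(\lambda) = \langle \overline{g} f, k_{\lambda}^{I}\rangle$, which is precisely the projection $P_I(\overline{g} f)$ in the $L^1$-extended sense recorded in the preliminaries. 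Hence $U_g^{*} f = P_I(\overline{g} f)$ and $P_M f = U_g U_g^{*} f = g\, P_I(\overline{g} f)$, as claimed (the argument in fact yields the identity for every $f \in L^2$; specializing to $f \in M$ recovers the stated formula and is consistent with $U_g^{*} U_g = I_{K_I}$).

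The one point requiring care --- and the main obstacle --- is that $g$ need not be bounded, since it has the form $g = a/(1 - Ib)$. Consequently, for $f \in L^2$ the product $\overline{g} f$ is only guaranteed to lie in $L^1$, not in $L^2$, so $P_I(\overline{g} f)$ must be interpreted as the $L^1$-extension of $P_I$ noted in the preliminaries, defined pointwise by $\lambda \mapsto \langle \overline{g} f, k_{\lambda}^{I}\rangle$. The argument then hinges on confirming that this pointwise-defined object genuinely coincides with the honest $K_I$-element $U_g^{*} f$; this is immediate once one observes that both represent the same functional on $K_I$ and agree on every reproducing kernel $k_{\lambda}^{I}$. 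I would make this coincidence explicit rather than gloss over it, as it is the only place where the possible unboundedness of $g$ enters.
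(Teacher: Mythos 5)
Your proof is correct, but it is organized differently from the paper's. The paper verifies directly that the map $f \mapsto g P_I(\overline{g} f)$ has the two defining properties of $P_M$: it is the identity on $M$ (via the computation $\langle \overline{g} g h, k_{\lambda}^{I}\rangle = \langle g h, g k_{\lambda}^{I}\rangle = \langle h, k_{\lambda}^{I}\rangle = h(\lambda)$, using the polarized isometry), and it annihilates $L^2 \ominus M$, which the authors split as $\overline{H^2_0} \oplus (H^2 \ominus M)$ and treat in two separate short computations. You instead invoke the abstract fact that for an isometry $U_g$ with closed range $M$ one has $U_g U_g^{*} = P_M$, and then reduce everything to the single identification $U_g^{*} f = P_I(\overline{g} f)$, carried out by testing against the kernels $k_{\lambda}^{I}$. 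The core computation --- pairing against $g k_{\lambda}^{I}$ and using Hitt's isometric multiplier property --- is the same in both arguments, but your factorization buys two things: the annihilation of $L^2 \ominus M$ comes for free (the adjoint of an isometry kills the orthogonal complement of its range, so no case analysis is needed), and the formula is established at once for all $f \in L^2$, which is in fact the generality in which the paper later uses the lemma (to define $A^M_{\vp} f = g P_I(\overline{g}\vp f)$ with $\vp f \notin M$). You are also more explicit than the paper about the role of the $L^1$-extension of $P_I$ when $g$ is unbounded, and about why the pointwise-defined function $\lambda \mapsto \langle \overline{g} f, k_{\lambda}^{I}\rangle$ coincides with the genuine $K_I$-element $U_g^{*} f$; the paper's direct verification quietly relies on the same interpretation. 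The paper's route is more elementary in that it uses nothing beyond reproducing-kernel manipulations; yours is slightly more structural and generalizes more cleanly.
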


\begin{proof}
To show that the map $f \mapsto gP_I(\overline{g} f)$ is indeed $P_M$, we need to show that this map is the identity on $M$ and vanishes on $L^2 \ominus M$. 

Let $f = g h \in g K_I$ with $h \in K_I$. Then for every $\lambda \in \D$ we have 
$$(P_I \overline{g} g h)(\lambda) = \langle \overline{g} g h, k^{I}_{\lambda} \rangle = \langle g h, g k^{I}_{\lambda}\rangle.$$ But, from our previous discussion,  multiplication by the extremal function $g$ is an isometry from $K_I$ onto $M$ and so
$$\langle g h, g k^{I}_{\lambda} \rangle = \langle h, k^{I}_{\lambda}\rangle  = h(\lambda).$$ Thus 
$$g(\lambda) (P_I \overline{g} f)(\lambda) = f(\lambda),$$ 
in other words, $f \mapsto gP_I(\overline{g} f)$ is the identity on $M$. 

To finish the proof, we need to show that the map $f \mapsto gP_I (\overline{g} f)$ vanishes on $L^2 \ominus M$. 
Clearly, when $f\in \overline{H^2_0} = \{\overline{h}: h \in H^2, h(0) = 0\}$ then $P_I(\overline{g}f)=0$.
When $f \in H^2 \ominus M$ we have 
$$(P_I \overline{g} f)(\lambda) = \langle \overline{g} f, k^{I}_{\lambda} \rangle = \langle f, g k^{I}_{\lambda} \rangle = 0$$
since $g k^{I}_{\lambda} \in M$ and $f \perp M$. 
\end{proof}

\begin{corollary}
The reproducing kernel for $M = g K_{I}$ is given by 
$$k_{\lambda}^{M}(z) = \overline{g(\lambda)} g(z) \frac{1 - \overline{I(\lambda)} I(z)}{1 - \overline{\lambda} z}.$$
\end{corollary}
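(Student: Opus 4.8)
The plan is to identify $k_\lambda^M$ directly from the defining reproducing property, namely that $k_\lambda^M$ is the unique element of $M$ satisfying $\langle f, k_\lambda^M\rangle = f(\lambda)$ for every $f \in M$ and every $\lambda \in \D$. So I would start from an arbitrary $f \in M$, write it as $f = gh$ with $h \in K_I$ (possible since $M = gK_I$), and evaluate at $\lambda$ using the reproducing kernel of $K_I$: $f(\lambda) = g(\lambda)h(\lambda) = g(\lambda)\langle h, k_\lambda^I\rangle$.

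The key step is to transport the inner product from $K_I$ back to $M$ via the isometric multiplier $U_g$. Since $U_g\colon K_I \to M$, $U_g f = gf$, is an isometry (Hitt's theorem, recalled above), it preserves inner products, so $\langle h, k_\lambda^I\rangle = \langle gh, g k_\lambda^I\rangle = \langle f, g k_\lambda^I\rangle$. Substituting this back gives $f(\lambda) = g(\lambda)\langle f, g k_\lambda^I\rangle = \langle f, \overline{g(\lambda)}\, g k_\lambda^I\rangle$, where the scalar $g(\lambda)$ moves into the second slot as its complex conjugate.

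It then remains to check that the candidate $\overline{g(\lambda)}\, g k_\lambda^I$ actually lies in $M$, so that the displayed identity genuinely identifies it as $k_\lambda^M$. This is immediate: $k_\lambda^I$ is bounded and hence lies in $K_I$, so $g k_\lambda^I \in gK_I = M$. Substituting the explicit formula $k_\lambda^I(z) = \frac{1 - \overline{I(\lambda)}I(z)}{1 - \overline{\lambda}z}$ then yields exactly $k_\lambda^M(z) = \overline{g(\lambda)}\,g(z)\,\frac{1 - \overline{I(\lambda)}I(z)}{1 - \overline{\lambda}z}$. There is no genuine obstacle here; the only point requiring attention is keeping track of which inner product (that of $K_I$ versus that of $M$) is in play, but since both are inherited from $L^2$ the isometry of $U_g$ makes the passage transparent. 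One could alternatively derive the same formula from Lemma \ref{Lemma1.1} by noting that $k_\lambda^M = P_M k_\lambda$ and computing $g P_I(\overline{g}k_\lambda)$, but the isometry argument is the shorter route.
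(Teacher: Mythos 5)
Your proof is correct, but it takes a different route from the paper's. The paper obtains the kernel by projecting the Szeg\H{o} kernel: $k_\lambda^M = P_M k_\lambda$, then invokes Lemma \ref{Lemma1.1} to write this as $g\,P_I(\overline{g}k_\lambda)$, and computes $P_I(\overline{g}k_\lambda)(z) = \langle \overline{g}k_\lambda, k_z^I\rangle = \langle k_\lambda, gk_z^I\rangle = \overline{(gk_z^I)(\lambda)} = \overline{g(\lambda)}k_\lambda^I(z)$, using the reproducing property of $k_\lambda$ in $H^2$ applied to $gk_z^I \in M \subset H^2$. You instead bypass both $P_M$ and Lemma \ref{Lemma1.1}: you verify the reproducing identity $\langle f, \overline{g(\lambda)}gk_\lambda^I\rangle = f(\lambda)$ directly for $f = gh \in M$, using Hitt's isometry to pass from $\langle h, k_\lambda^I\rangle$ to $\langle gh, gk_\lambda^I\rangle$, check membership of the candidate in $M$, and conclude by uniqueness of reproducing kernels. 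The two arguments rest on the same underlying fact --- the isometric multiplier property of $g$, which is also what drives the paper's proof of Lemma \ref{Lemma1.1} --- but yours is more self-contained (it needs neither the projection formula nor even the existence of $P_M$ as an a priori tool), while the paper's is shorter given that Lemma \ref{Lemma1.1} has already been established, and that lemma is needed anyway to define the truncated Toeplitz operators in the next section. You correctly flag the Lemma \ref{Lemma1.1} route as an alternative; that alternative is in fact exactly the paper's proof.
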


\begin{proof}
Observe that $k_{\lambda}^M(z)=(P_Mk_{\lambda})(z)=g(z)(P_I \overline{g}k_{\lambda})(z)$ and 
\[
 (P_I\overline{g}k_{\lambda})(z)=\langle \overline{g}k_{\lambda},k_z^I\rangle
 =\langle k_{\lambda},gk_z^I\rangle=\overline{(gk_z^I)(\lambda)}=\overline{g(\lambda)}
 k_{\lambda}^I(z).
\]
\end{proof}

\section{Truncated Toeplitz operators on nearly invariant subspaces}

We are now  able to introduce truncated Toeplitz operators on nearly
invariant subspaces. Certainly whenever $\vp$ is a bounded function we can use Lemma \ref{Lemma1.1}
to see  that the operator
\[
 A^M_{\vp} f: =P_M\vp f=gP_I(\overline{g}\vp f), \quad f \in M,
\]
is well-defined and bounded. 

The most general situation is when the symbol $\vp$ is a Lebesgue measurable function on $\T$ and $|g|^2\vp\in L^2$. Then, for every bounded $h  \in K_I$,
the function $|g|^2\vp h$ belongs to $L^2$ and so $P_I(|g|^2\vp h)\in K_I$, and, by
the isometric multiplier property of $g$ in $K_I$, we get 
$$P_M(\vp h)  = g P_I(|g|^2\vp h)  \in gK_I= M.$$
Note that by the isometric property of $g$,
the set $gK^{\infty}_I$, where $K^{\infty}_I$ is the set of bounded functions in $K_I$,
is dense in $gK_I$. Hence, in this situation, $A^M_{\vp}$ is densely defined.

Let $\mathcal{T}^{M}$ denote the densely defined $A_{\vp}^{M}$, $|g|^2 \vp \in L^2$, which extend to be bounded on $M$ and recall that $U_{g}: K_I \to M, U_g f = g f$, is  unitary.

\begin{theorem}\label{prop1.2}
Let $M=gK_I$ be a nearly invariant subspace with extremal function $g$ and suitable
inner function $I$, $I(0)=0$. Then for any Lebesgue measurable function $\vp$ on $\T$ with $|g|^2 \vp \in L^2$ we have 
$$U_{g}^{*} A^{M}_{\vp} U_{g} = A_{|g|^2 \vp}.$$
\end{theorem}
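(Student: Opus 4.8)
The plan is to verify the operator identity on a common dense domain by direct computation; the whole content of the theorem is that the unitary $U_g$ transports the defining formula for $A^M_\vp$ onto that for $A_{|g|^2\vp}$. First I would pin down the domains. The operator $A_{|g|^2\vp}$ is densely defined on $K^{\infty}_I$, the bounded functions of $K_I$, by $A_{|g|^2\vp}h = P_I(|g|^2\vp h)$; this is legitimate because $|g|^2\vp \in L^2$ together with $h$ bounded forces $|g|^2\vp h \in L^2$, so that $P_I(|g|^2\vp h)\in K_I$. Since $U_g$ is unitary from $K_I$ onto $M$ and carries $K^{\infty}_I$ bijectively onto $gK^{\infty}_I$ (the dense domain on which $A^M_\vp$ was defined just above), the two sides $U_g^*A^M_\vp U_g$ and $A_{|g|^2\vp}$ share the dense domain $K^{\infty}_I$, and it suffices to check that they agree there.

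Next I would fix $h\in K^{\infty}_I$ and run the computation. We have $U_g h = gh \in gK^{\infty}_I$, and by the (extended) definition of the truncated Toeplitz operator on $M$ together with the formula of Lemma \ref{Lemma1.1},
\[
A^M_\vp(gh) = P_M(\vp\, gh) = g\,P_I\bigl(\overline{g}\,\vp\, g h\bigr) = g\,P_I\bigl(|g|^2\vp h\bigr),
\]
where the last equality is just $\overline{g}g = |g|^2$. The key remaining step is to apply $U_g^*$. Because $U_g$ is unitary we have $U_g^* = U_g^{-1}$, so for every $k\in K_I$ one has $U_g^*(gk) = U_g^* U_g k = k$. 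Applying this with $k = P_I(|g|^2\vp h)\in K_I$ gives
\[
U_g^* A^M_\vp U_g h = U_g^*\bigl(g\,P_I(|g|^2\vp h)\bigr) = P_I\bigl(|g|^2\vp h\bigr) = A_{|g|^2\vp}h.
\]
As $h\in K^{\infty}_I$ was arbitrary and this is the common dense domain, the identity $U_g^*A^M_\vp U_g = A_{|g|^2\vp}$ follows.

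I expect that the only genuine subtlety, rather than a real obstacle, lies in the bookkeeping around domains and in the meaning of $P_M(\vp\, gh)$ when $\vp$ is merely measurable with $|g|^2\vp\in L^2$: in that generality $\vp gh$ need not belong to $L^2$, or even to $L^1$, so one cannot interpret $P_M(\vp\, gh)$ as a naive projection. This is precisely why the defining relation $A^M_\vp(gh) = gP_I(|g|^2\vp h)$ should be taken from the discussion preceding the theorem (where the projection $P_I$ is extended to such symbols via the $L^1$-formula noted in the preliminaries), rather than by trying to make sense of $P_M(\vp\, gh)$ directly. Once the domains are set up this way the computation above is forced, and the unitarity of $U_g$ renders the conjugation entirely transparent.
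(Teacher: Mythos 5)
Your proposal is correct and follows essentially the same route as the paper: the paper's proof is exactly the one-line computation $A^M_{\vp}U_g h = A^M_{\vp}(gh) = gP_I(|g|^2\vp h) = U_g A_{|g|^2\vp}h$ on a dense set of $h$, which is the heart of your argument. Your additional bookkeeping (restricting to $h\in K^{\infty}_I$, noting that $\vp gh$ need not lie in $L^1$ so that $A^M_\vp$ must be \emph{defined} by $gP_I(|g|^2\vp h)$ rather than as a naive projection) is sound and, if anything, slightly more careful than the paper's own write-up.
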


\begin{proof}
Let $h\in K_I$, then
\[
 A^M_{\vp}U_gh=A^M_{\vp}gh=gP_I|g|^2\vp h=U_gA_{|g|^2\vp}h.
\]
\end{proof}

The above shows that the map 
$$A_{\vp}^{M} \mapsto A_{|g|^2 \vp}$$ establishes a spatial isomorphism between $\mathcal{T}^{M}$ (the bounded truncated Toeplitz operators on $M$) and $\mathcal{T}_{I}$ (the bounded truncated Toeplitz operators on $K_I$) induced, via conjugation, by the unitary operator $U_{g}: K_I \to M = g K_I$, i.e., $\mathcal{T}^{M} = U_{g} \mathcal{T}_{I} U_{g}^{*}$.

In view of the results in Sarason's paper \cite{Sa07} 
one can use the above spatial isomorphism to prove the following facts: 
\begin{enumerate}
\item $\mathcal{T}_{M}$ is a weakly closed linear subspace of operators on $M$. 
\item $A^{M}_{\vp} \equiv 0$ if and only if $|g|^2 \vp \in I H^2 + \overline{I H^2}$. 
\item Recalling the conjugation $C$ mentioned earlier, define $C_{g} := U_g C U_{g}^{*}$. This defines a conjugation on $M$ and $C_{g} A C_{g} = A^{*}$ for every $A \in \mathcal{T}^{M}$,
and since $(A^M_{\vp})^*=A^M_{\overline{\vp}}$ (as can be verified by direct inspection),
we see that $\mathcal{T}^{M}$ is also a collection of complex symmetric operators
(with respect to $C_g$).
\item If $S_{g} := U_g A_z U_{g}^{*}$, then a bounded operator $A$ on $M$ belongs to $\mathcal{T}^{M}$ if and only if there are functions $\vp_1, \vp_2 \in M$ so that 
$$A - S_{g} A S_{g}^{*} = (\vp_1 \otimes k^{M}_{0}) + (k^{M}_{0} \otimes \vp_2).$$ 
\item Recall from our earlier discussion of \cite{Sa07} that $A$ is a 
rank-one truncated Toeplitz operator on $K_I$ if and only if it can be written as constant multiple of one of the
following three types (two of which are non-selfadjoint, and one of which is
selfadjoint)
\[
 k_{\lambda}^I\otimes Ck_{\lambda}^I,
 \quad C k_{\lambda}^I\otimes k_{\lambda}^I,
 \quad k_{\zeta}^I\otimes k_{\zeta}^I
\]
where $\lambda\in\D$, $\zeta\in \ADC(I)$. It follows immediately that for $\lambda\in\D$,
\beqa
 U_g(k_{\lambda}^I\otimes Ck_{\lambda}^I)U_g^*=U_gk_{\lambda}^I\otimes U_gCk_{\lambda}^I
 =gk_{\lambda}^I\otimes gCk_{\lambda}^I,\\
 U_g(Ck_{\lambda}^I\otimes k_{\lambda}^I)U_g^*=U_gCk_{\lambda}^I\otimes U_gk_{\lambda}^I
 =gCk_{\lambda}^I\otimes gk_{\lambda}^I,
\eeqa
are rank-one truncated Toeplitz operators on $M = gK_I$. What are the self-adjoint truncated Toeplitz operators on $M$? The result here is the following: 
Suppose that $\zeta\in \ADC(I)$, then
\[
 gk_{\zeta}^I\otimes gk_{\zeta}^I
\]
is a self-adjoint rank-one truncated Toeplitz operator on $M$.
Conversely, if $A$ is a non-trivial rank-one self adjoint truncated Toeplitz operator on $M$, then there exists a $\zeta \in \ADC(I)$ so that $A= g k_{\zeta}^{I} \otimes g k_{\zeta}^{I}$. 
\item Recall from our earlier discussion that every maximal algebra in $\mathcal{T}_{I}$ can be written as the commutant of a generalization of the Clark unitary operator. Conjugating by $U_g$ we get an analogous result for the maximal algebras of $\mathcal{T}^{M}$. 
\item Again from our earlier discussion, for two inner functions $I_1$ and $I_2$, that $\mathcal{T}_{I_1}$ is spatially isomorphic to $\mathcal{T}_{I_2}$ if and only if either $I_1 = \psi \circ I_2 \circ \phi$ or $I_1 = \psi \circ \overline{I_2(\overline{z})} \circ \phi$ for some disk automorphisms $\phi, \psi$. Using Theorem \ref{prop1.2} we get the exact same result for $\mathcal{T}^{M_1}, \mathcal{T}^{M_{2}}$. 
Notice how this result is independent of the corresponding
extremal functions $g_1$ and $g_2$.
\end{enumerate}

\section{Existence of non-tangential limits}
With trivial examples one can show that there is no point $\zeta \in \T$ such that every $f \in H^2$ has a non-tangential limit at $\zeta$. However, there are model spaces $K_I$ and $\zeta \in \T$ so that every $f \in K_I$ has a non-tangential limit at $\zeta$. This situation was thoroughly discussed by Ahern and Clark \cite{AC70} with the following theorem. For $\zeta \in \T$ and $\alpha > 1$ let 
$$\Gamma_{\alpha}(\zeta) := \left\{z \in \D: \frac{|z - \zeta|}{1 - |z|} < \alpha \right\}$$ be the standard Stolz domains with vertex at $\zeta$.

\begin{theorem}[Ahern-Clark]\label{thmACbis}
Let $I$ be an inner function. Every function $f\in K_I$ has a non-tangential boundary limit at $\zeta\in\T$
if and only if for every fixed Stolz domain $\Gamma_{\alpha}(\zeta)$ at $\zeta$, the family 
$$F_{\alpha, \zeta}:= \left\{k_{\lambda}^I:\lambda\in \Gamma_{\alpha}(\zeta)\right\}$$ is uniformly norm
bounded, i.e., 
$$\sup\left\{\|k_{\lambda}^{I}\|^2 = \frac{1 - |I(\lambda)|^2}{1 -
    |\lambda|^2}: \lambda \in \Gamma_{\alpha}(\zeta)\right\} 
< \infty.$$
\end{theorem}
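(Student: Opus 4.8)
The plan is to exploit the reproducing property $f(\lambda) = \langle f, k_\lambda^I\rangle$ together with two soft functional-analytic tools — the uniform boundedness principle for one implication and an $\eps/3$ closedness argument for the other — so that the only genuinely analytic input is the behaviour of $I$ near $\zeta$.

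For the forward direction, suppose every $f \in K_I$ has a finite non-tangential limit at $\zeta$, and fix a Stolz domain $\Gamma_\alpha(\zeta)$. First I would observe that each such $f$ is bounded on $\Gamma_\alpha(\zeta)$: since $\overline{\Gamma_\alpha(\zeta)}$ meets $\T$ only at $\zeta$, points of large modulus lie close to $\zeta$ where $f(\lambda)$ is near its limit, while the remaining points stay in a compact subset of $\D$ on which the analytic $f$ is bounded. Hence for every $f$ we have $\sup_{\lambda \in \Gamma_\alpha(\zeta)}|\langle f, k_\lambda^I\rangle| < \infty$. The family of functionals $\{f \mapsto \langle f, k_\lambda^I\rangle : \lambda \in \Gamma_\alpha(\zeta)\}$ is thus pointwise bounded on the Hilbert space $K_I$, so by the \emph{Banach--Steinhaus theorem} it is uniformly norm bounded. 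Since the norm of $f\mapsto\langle f,k_\lambda^I\rangle$ equals $\|k_\lambda^I\|$, this yields $\sup_{\lambda\in\Gamma_\alpha(\zeta)}\|k_\lambda^I\| < \infty$, the desired bound.

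For the converse, assume the bound $\|k_\lambda^I\|^2 = \frac{1-|I(\lambda)|^2}{1-|\lambda|^2} \le C$ on $\Gamma_\alpha(\zeta)$. I would let $E$ be the set of $f \in K_I$ whose non-tangential limit at $\zeta$ exists and is finite; this is a linear subspace, and the goal is $E = K_I$. The bound makes $E$ \emph{closed}: if $f_n \to f$ with $f_n \in E$, then for $\lambda, \lambda' \in \Gamma_\alpha(\zeta)$ one estimates $|f(\lambda)-f(\lambda')|$ by inserting $f_n$, using $|f(\lambda)-f_n(\lambda)| = |\langle f-f_n,k_\lambda^I\rangle| \le C^{1/2}\|f-f_n\|$ uniformly in $\lambda$ to control the outer terms and the convergence of $f_n$ at $\zeta$ for the middle term; this shows $f(\lambda)$ is Cauchy as $\lambda\to\zeta$, so $f\in E$. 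Since finite linear combinations of the kernels $k_\mu^I$, $\mu \in \D$, are dense in $K_I$, it then suffices to prove $k_\mu^I \in E$ for each $\mu$.

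This last point carries the analytic heart of the theorem and is where I expect the main obstacle. Evaluating, $k_\mu^I(\lambda) = \frac{1-\overline{I(\mu)}I(\lambda)}{1-\overline{\mu}\lambda}$, and as $\lambda \to \zeta$ the denominator tends to the nonzero limit $1-\overline{\mu}\zeta$, so convergence hinges entirely on the behaviour of $I(\lambda)$ — which a priori need not converge at the \emph{specific} point $\zeta$. Here the kernel bound is decisive: because $\frac{1-|I(\lambda)|^2}{1-|\lambda|^2}$ and $\frac{1-|I(\lambda)|}{1-|\lambda|}$ are comparable near $\zeta$, the hypothesis forces $\liminf_{\lambda\to\zeta}\frac{1-|I(\lambda)|}{1-|\lambda|} < \infty$, and the \emph{Julia--Carath\'eodory theorem} then guarantees that $I$ has a non-tangential limit $\eta$ with $|\eta| = 1$ at $\zeta$ (that is, $\zeta \in \ADC(I)$). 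Consequently $k_\mu^I(\lambda) \to \frac{1-\overline{I(\mu)}\eta}{1-\overline{\mu}\zeta}$, so $k_\mu^I \in E$ and the proof closes. As a by-product, the functional $f \mapsto \angle\lim_{\lambda\to\zeta}f(\lambda)$ is bounded on $K_I$, and Riesz representation produces the boundary kernel $k_\zeta^I$.
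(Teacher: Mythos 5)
Your proof is correct, and on the necessity side it coincides with the paper's (both are immediate from the uniform boundedness principle, your verification that each $f$ is bounded on a Stolz domain being the only detail the paper leaves implicit). On the sufficiency side, however, you take a genuinely different route. The paper extracts weakly convergent subsequences from the bounded family $\{k_\lambda^I\}$ and uses the truncated backward shifts $A_{\overline{z}}^N$, together with $\|A_{\overline{z}}^N f\|\to 0$, to show that every subsequential limit of $f(\lambda_n)$ equals the single number $\sum_{j\ge 0}\widehat{f}(j)\zeta^j$; this is a self-contained Hilbert-space/Fourier argument that never mentions angular derivatives. You instead show that the set $E$ of functions with a finite non-tangential limit at $\zeta$ is a closed subspace (your $\varepsilon/3$ argument with the uniform kernel bound is sound), reduce by density to the kernels $k_\mu^I$, and then invoke the Julia--Carath\'eodory theorem: the kernel bound along the radius gives $\liminf_{\lambda\to\zeta}(1-|I(\lambda)|)/(1-|\lambda|)<\infty$, hence $\zeta\in\ADC(I)$, hence each $k_\mu^I$ converges non-tangentially. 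Both arguments are complete, but they trade different things. Your route is shorter and more modular for model spaces, at the price of importing the classical angular-derivative machinery; the paper's route is precisely engineered to avoid that machinery, because its point (stated just before the proof) is that the same argument transfers to nearly invariant subspaces $M=gK_I$ (Theorem \ref{MNTL}), where uniform boundedness of the kernels $k_\lambda^M$ does \emph{not} imply $\zeta\in\ADC(I)$ (see the example preceding Theorem \ref{MNTL} and the dichotomy of Theorem \ref{thmdicho}), so the step ``kernel bound $\Rightarrow$ $\zeta\in\ADC(I)$'' that your proof hinges on is simply false there. A minor bonus of the paper's method is that it identifies the boundary value explicitly as the convergent Fourier series $\sum_j \widehat{f}(j)\zeta^j$, whereas your argument produces the limit only through density.
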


The original proof of this theorem involved a technical lemma using operator theory. 
We will give a new proof which avoids this technical lemma. This will allow us to 
consider the situation of nearly invariant subspaces where the operator theory lemma of Ahern-Clark 
no longer works. Also note that the uniform boundedness of the families $F_{\alpha, \zeta}$ is equivalent to the condition $\zeta \in \ADC(I)$. 

\begin{proof}[Proof of Theorem \ref{thmACbis}]
From the uniform boundedness principle it is clear that if every function in $K_I$
has a non-tangential limit at $\zeta\in\T$, then the kernels $k_{\lambda}^I$
are uniformly bounded in Stolz domains $\Gamma_{\alpha}(\zeta)$.

Let us consider the sufficiency part. If, for fixed $\alpha$ and $\zeta$,  the family $F_{\alpha, \zeta}$ is uniformly bounded, then there
exists a sequence $\Lambda := \{\lambda_n\}_{n \geq 1}\subset \Gamma_{\alpha}(\zeta)$ such that $k^{I}_{\lambda_n}$ converges weakly to some $k^{I, \Lambda}_{\zeta} \in K_I$, i.e., for every function $f\in K_I$,
\begin{equation} \label{hold}
 f(\lambda_n)=\langle f,k^{I}_{\lambda_n}\rangle\to \langle f,k_{\zeta}^{I, \Lambda}\rangle
 =:f_{\zeta}^{\Lambda} \quad \mbox{as $n \to \infty$}.
\end{equation}

Observe that 
$$(A_{\overline{z}} f)(\lambda) = \frac{f(\lambda) - f(0)}{\lambda}$$ and 
$$(A_{\overline{z}}^{N} f)(\lambda) = \frac{f(\lambda)-\sum_{j=0}^{N-1}
 \widehat{f}(j)\lambda^j }{\lambda^N},$$ where $\widehat{f}(j)$ is the $j$-th Fourier coefficient of $f$. As a result we have
 $$\|A_{\overline{z}}^{N} f\|^2 = \sum_{j = N}^{\infty} |\widehat{f}(j)|^2 \to 0 \quad \mbox{as $N \to \infty$}.$$

Apply \eqref{hold} to the functions $A_{\overline{z}}^{N} f$ to get
$$
 \langle A_{\overline{z}}^{N} f,k_{\lambda}^I\rangle= 
 (A_{\overline{z}}^{N} f)(\lambda)=\frac{f(\lambda)-\sum_{j=0}^{N-1} \widehat{f}(j) \lambda^j }{\lambda^N}.
$$
Hence
\beqa
 \langle A_{\overline{z}}^N f,k_{\zeta}^{I, \Lambda}\rangle&=&
 \lim_{n\to\infty}\langle A_{\overline{z}}^N f,k_{\lambda_n}^I\rangle
 =\lim_{n\to\infty}\frac{f(\lambda_n)-\sum_{j=0}^{N-1}
 \widehat{f}(j) \lambda_n^j}{\lambda_n^N}\\
 &=&\frac{f_{\zeta}^{\Lambda}-\sum_{j=0}^{N-1}
 \widehat{f}(j)  \zeta^j}{\zeta^N}.
\eeqa
Now using the fact that $\|A_{\overline{z}}^Nf\| \to  0$ when $N \to \infty$, we obtain from
  $\langle A_{\overline{z}}^{N} f, k_{\zeta}^{I, \Lambda}\rangle \to 0$ that
\[
 \lim_{N\to\infty} \frac{f_{\zeta}^{\Lambda}-\sum_{j=0}^{N-1}
 \zeta^j \widehat{f}(j)}{\zeta^N}=0,
\]
and, since the denominator in the above limit is harmless, we get
\[
\sum_{j=0}^{\infty}
  \widehat{f}(j) \zeta^j =f_{\zeta}^{\Lambda}.
\]
This means that the Fourier series for $f$ converges at $\zeta$.

Now take any sequence $\Lambda := \{\lambda_n\}_{n \geq 1}$ converging non-tangentially
to $\zeta$. In view of the uniform boundedness of the family 
$F_{\alpha, \zeta}$, there exists a weakly 
convergent subsequence $\Lambda':= \{k^{I}_{\lambda_{n_l}}\}_{l \geq 1}$.
Repeating the above proof, we obtain 
\[
\sum_{j=0}^{\infty}
 \zeta^j \widehat{f}(j) =f_{\zeta}^{\Lambda'}.
\]
In other words, for every sequence $\{\lambda_n\}_{n \geq 1}$ converging non-tangentially
to $\zeta$ there exists a subsequence $\{\lambda_{n_j}\}_{j \geq 1}$ such that
$\{f(\lambda_{n_j})\}_{j \geq 1}$ converges to the same limit
\[
 l_{\zeta}:=\sum_{j=0}^{\infty}
 \widehat{f}(j) \zeta^j.
\]
It follows that $f$ has a finite non-tangential limit at $\zeta$.
\end{proof}

We will now prove an analog of Theorem \ref{thmACbis} for nearly invariant subspaces where the situation is a bit more complicated. 
Certainly a necessary condition for the existence of finite non-tangential limits for every function in $g K_I$ at $\zeta$ is that $g$ has a finite non-tangential limit at $\zeta$ (since indeed $g$ is extremal and so $g \in g K_{I}$). 
As we will see with the next example, it is not possible to deduce the existence of the non-tangential
limits for every function in $gK_I$ merely from the uniform norm boundedness of the reproducing
kernels in a Stolz angles.

\begin{example}
Given any inner function $I$, with $I(0) = 0$, we have already cited Sarason's result \cite{Sa88}
stating that every isometric multiplier on $K_I$ is of the form 
\bea\label{defg}
 g=\frac{a}{1-Ib},
\eea
where $a$ and $b$ are functions in the ball of $\Hi$ with $|a|^2+|b|^2=1$ a.e.\ on $\T$.
Let $\Lambda_1=\{1-4^{-n}\}_{n \geq 1}$ and 
$\Lambda_2=\{1-2^{-n}\}_{n \geq 1}$, which are both interpolating 
sequences. Observe that $\Lambda_2$ consists
of the sequence $\Lambda_1$ to which we have added in a certain way the 
(pseudohyperbolic) midpoints
of two consecutive points of $\Lambda_1$. Let $B_i$ be the Blaschke
product whose zeros are $\Lambda_i$. In particular, $B_1$ vanishes on $\Lambda_1$ and is big on
$\Lambda_2\setminus \Lambda_1$, say $|B_1(\lambda)|\ge\delta>0$ for
$\lambda\in\Lambda_2\setminus \Lambda_1$ (this comes from the fact that $B_1$
is an interpolating Blaschke product, and that on $\Lambda_2\setminus \Lambda_1$
we are pseudohyperbolically far from the zeros of $B_1$).

Next, define
\[
 a=\frac{1}{2}+\frac{1}{4}B_1
\]
which is a function oscillating on $\Lambda_2$ between the values $1/2$ (on
$\Lambda_1$) and $|a(\lambda)-1/2|\ge \delta/4$, 
$\lambda\in\Lambda_2\setminus \Lambda_1$.

By construction we have $1/4\le |a| \le 3/4$. In particular
there is an outer function $b_0$ in the ball of $\Hi$ such that $|a|^2+|b_0|^2=1$ a.e.\ on $\T$.
Moreover $0<\sqrt{7}/4\le |b_0|=\sqrt{1-|a|^2}\le \sqrt{15}/4<1$ a.e.\ on $\T$ 
and this extends by the
maximum/minimum principles to the disk ($b_0$ being outer).
As a consequence, for
every inner function $J$, the function $g=a/(1-Jb_0)$ will be a bounded outer function
(actually invertible).
We will use this fact in particular for $J=IB_2$: 
\[
 g=\frac{a}{1-Jb_0}=\frac{a}{1-I\underbrace{B_2b_0}_{=b \text{ in \eqref{defg}}}}.
\]
Then
\[
 \left|g(\lambda)-\frac{1}{2}\right|=\left|a(\lambda)-\frac{1}{2}\right|
 \left\{\begin{array}{ll}
  =0 & \mbox{when }\lambda\in\Lambda_1\\
  \ge \frac{\dst \delta}{\dst 4} &\mbox{when }\lambda\in\Lambda_2\setminus\Lambda_1.
 \end{array}
 \right.
\]
Hence $g$ has no limit at $\zeta=1$. 
Choose now an inner function $I$ such that $\zeta=1\in \ADC(I)$. Then, the kernels $k_{\lambda}^M=\overline{g(\lambda)}gk_{\lambda}^I$ are
uniformly bounded in a fixed Stolz region $\Gamma_{\alpha}(1)$ at $\zeta=1$. Indeed $g$ is bounded,
the kernels $k_{\lambda}^I$ are uniformly bounded in $\Gamma_{\alpha}(1)$ by $1\in \ADC(I)$
(see Theorem \ref{thmACbis}),
and $\|k_{\lambda}^M\|=|g(\lambda)|\|k_{\lambda}^I\|$.
\end{example}


\begin{theorem} \label{MNTL}
Let $M=gK_I$ be a non-trivial nearly invariant subspace with extremal function $g$ and
corresponding inner function $I$. Then every function in $M$ has a finite non-tangential limit at $\zeta \in \T$ if and only if the following two conditions are satisfied:
\begin{enumerate}
\item $g$ has a finite non-tangential limit at $\zeta$. 
\item For every Stolz region $\Gamma_{\alpha}(\zeta)$ at $\zeta$, the family 
$F^{M}_{\alpha, \zeta}: =\{k_{\lambda}^M:\lambda\in\Gamma_{\alpha}(\zeta)\}$ is uniformly norm bounded, i.e., 
$$\sup\left\{\|k_{\lambda}^{M}\|^2 = |g(\lambda)|^2 \frac{1 -
    I(\lambda)|^2}{1 - |\lambda|^2}: \lambda \in
  \Gamma_{\alpha}(\zeta)\right\} 
< \infty.$$
\end{enumerate}
\end{theorem}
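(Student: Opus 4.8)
The plan is to prove both implications by transferring everything to the model space $K_I$ through the isometric multiplier $U_g\colon K_I\to M$, $U_gh=gh$, and then re-running the backward-shift argument from the proof of Theorem \ref{thmACbis} while carrying along the scalar factor $g(\lambda)$. For the necessity I would first note that, since $g$ is extremal and $1\in K_I$ (recall $I(0)=0$), we have $g=g\cdot 1\in gK_I=M$; hence if every function in $M$ has a finite non-tangential limit at $\zeta$, so does $g$, which is (1). For (2) I would argue exactly as in the necessity half of Theorem \ref{thmACbis}: fix $\alpha$ and consider the point evaluations $f\mapsto f(\lambda)=\langle f,k_{\lambda}^M\rangle$, $\lambda\in\Gamma_{\alpha}(\zeta)$. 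Since each $f\in M$ has a finite non-tangential limit at $\zeta$ it is bounded on $\Gamma_{\alpha}(\zeta)$ (the closure of a Stolz region meets $\T$ only at $\zeta$), so the uniform boundedness principle gives $\sup_{\lambda\in\Gamma_{\alpha}(\zeta)}\|k_{\lambda}^M\|<\infty$, i.e. (2).

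The substance is the sufficiency, so assume (1) and (2). Every $f\in M$ is $f=gh$ with $h=U_g^*f\in K_I$, and recall from the Corollary that $k_{\lambda}^M=\overline{g(\lambda)}\,g\,k_{\lambda}^I=\overline{g(\lambda)}\,U_gk_{\lambda}^I$. Given any sequence $\Lambda=\{\lambda_n\}$ with $\lambda_n\to\zeta$ inside a fixed $\Gamma_{\alpha}(\zeta)$, I would set $v_n:=\overline{g(\lambda_n)}\,k_{\lambda_n}^I\in K_I$, so that $\|v_n\|=\|k_{\lambda_n}^M\|$ is bounded by (2); passing to a subsequence I may assume $v_{n_j}\to v$ weakly in $K_I$. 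Then for $f=gh$,
\[
 f(\lambda_{n_j})=g(\lambda_{n_j})h(\lambda_{n_j})=\langle h,v_{n_j}\rangle\to\langle h,v\rangle=:f_{\zeta}^{\Lambda}.
\]
To show $f_{\zeta}^{\Lambda}$ is independent of the sequence, apply this to $A_{\overline{z}}^N h$ in place of $h$ and use $(A_{\overline{z}}^N h)(\lambda)=(h(\lambda)-\sum_{l=0}^{N-1}\widehat{h}(l)\lambda^l)/\lambda^N$ to get
\[
 \langle A_{\overline{z}}^N h,v_{n_j}\rangle=\frac{f(\lambda_{n_j})-g(\lambda_{n_j})\sum_{l=0}^{N-1}\widehat{h}(l)\lambda_{n_j}^l}{\lambda_{n_j}^N}.
\]
Letting $j\to\infty$ and using (1) so that $g(\lambda_{n_j})\to g(\zeta)$, the right-hand side tends to $(f_{\zeta}^{\Lambda}-g(\zeta)\sum_{l=0}^{N-1}\widehat{h}(l)\zeta^l)/\zeta^N$, while the left-hand side tends to $\langle A_{\overline{z}}^N h,v\rangle$. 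Since $\|A_{\overline{z}}^N h\|\to 0$ as $N\to\infty$ and $|\zeta^N|=1$, letting $N\to\infty$ forces $f_{\zeta}^{\Lambda}=g(\zeta)\sum_{l=0}^{\infty}\widehat{h}(l)\zeta^l$, a value independent of $\Lambda$. As at the end of the proof of Theorem \ref{thmACbis}, this sequence-independence — together with (2), which supplies the weakly convergent subsequences for an arbitrary non-tangentially approaching sequence — shows that $f$ has a finite non-tangential limit at $\zeta$.

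The delicate point, and the reason the plain Ahern--Clark criterion on $K_I$ does not suffice, is the case $g(\zeta)=0$: there $h$ need not have a limit at $\zeta$ and $\zeta$ need not lie in $\ADC(I)$, so one cannot first extract a limit for $h$. The trick that makes the argument go through is to keep the product $f(\lambda)=g(\lambda)h(\lambda)$ together, so that it is controlled by $\|k_{\lambda}^M\|$ via (2), rather than separating the two factors. When $g(\zeta)=0$ the final formula collapses to $f_{\zeta}^{\Lambda}=0$, so every function in $M$ has non-tangential limit $0$; when $g(\zeta)\neq 0$ it shows that $\sum_l\widehat{h}(l)\zeta^l$ converges, i.e. $h$ has a non-tangential limit and $f(\zeta)=g(\zeta)h(\zeta)$. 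This is precisely the dichotomy anticipated in the introduction, and I expect the $g(\zeta)=0$ case to be the only genuinely new obstacle beyond the model-space argument.
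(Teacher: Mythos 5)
Your proof is correct, and its engine is the same as the paper's: pair the backward-shift iterates $A_{\overline{z}}^{N}h$ against a weak limit of kernel vectors supplied by condition (2), let $N\to\infty$ using $\|A_{\overline{z}}^{N}h\|\to 0$, and finish with the subsequence argument from the proof of Theorem \ref{thmACbis}. The difference is organizational, and worth recording. The paper splits the sufficiency into two cases: if $\zeta\in\ADC(I)$ it simply quotes Ahern--Clark (every $h\in K_I$ has a limit at $\zeta$, and $g$ does by (1)); if $\zeta\notin\ADC(I)$ it uses the equivalence in Theorem \ref{thmACbis} to produce a sequence with $\|k^{I}_{\lambda_n}\|\to\infty$, deduces from (2) that $g(\lambda_n)\to 0$ (hence $\angle\lim_{\lambda\to\zeta} g(\lambda)=0$), and then its version of your computation (written there as $Q_N=U_gA_{\overline{z}}^{N}U_g^{*}$ paired against $k_{\zeta}^{M,\Lambda'}$) kills the polynomial term and forces the limit $0$ for every $f\in M$. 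You never split cases and never invoke the statement of Theorem \ref{thmACbis}: keeping $g(\lambda_{n_j})\to g(\zeta)$ general, your single computation yields $f_{\zeta}^{\Lambda}=\lim_{N\to\infty}g(\zeta)\sum_{l=0}^{N-1}\widehat{h}(l)\zeta^{l}$, which is sequence-independent whether or not $g(\zeta)=0$. (Your vectors $v_n=\overline{g(\lambda_n)}k^{I}_{\lambda_n}$ are exactly $U_g^{*}k^{M}_{\lambda_n}$, so under the unitary $U_g$ the two computations coincide term by term.) What each buys: the paper's case split keeps the $\ADC$ case trivial and isolates the genuinely new phenomenon, namely that outside $\ADC(I)$ the limits must all be $0$; your unified argument is self-contained, slightly shorter, and exhibits the limit explicitly --- $g(\zeta)h(\zeta)$ when $g(\zeta)\neq 0$, and $0$ when $g(\zeta)=0$ --- so that the dichotomy of Theorem \ref{thmdicho} is already visible inside this proof rather than requiring a separate argument.
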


\begin{proof}
If every function in $M$ has a finite non-tangential limit at $\zeta$, then (1) holds since $g \in M$. 
Condition (2) holds by the uniform boundedness principle. 

Now suppose that conditions (1) and (2) hold. 

\underline{Case 1:} $\zeta\in \ADC(I)$. In this case, every function in $M$ has a non-tangential
boundary limit at $\zeta$ since every function in $K_I$ has as well as $g$ does.

\underline{Case 2:} $\zeta\not\in\ADC(I)$.
From the Ahern-Clark theorem (Theorem \ref{thmACbis}), we know that
$\zeta \in \ADC(I)$ if and only if
\[
 \|k_{\lambda}^I\|^2=\frac{1-|I(\lambda)|^2}{1-|\lambda|^2}
\]
is uniformly bounded in Stolz regions $\Gamma_{\alpha}(\zeta)$.
Hence, if $\zeta\not\in\ADC(I)$, then there is a sequence $\Lambda:=\{\lambda_n\}_{n \geq 1}$
in $\Gamma_{\alpha}(\zeta)$ such that $ \|k_{\lambda_n}^I\|^2\to +\infty$.
Recall that $k_{\lambda}^M=\overline{g(\lambda)}gk_{\lambda}^I$ so that
$\|k_{\lambda_n}^M\|=|g(\lambda_n)|\|k_{\lambda_n}^I\|$. In order that this
last expression is uniformly bounded it is necessary that $g(\lambda_n)\to 0$
when $n\to\infty$.

Now, since $F^{M}_{\alpha, \zeta}$ is a uniformly bounded
family, there exists a subsequence $\Lambda':= \{\lambda_{n_j}\}_{j \geq 1}$ so that $k^M_{\lambda_{n_j}}$ converges weakly
to a function $k_{\zeta}^{M,\Lambda'}$.

Let us introduce the following operator
\[
  Q_N=U_g A_{\overline{z}}^N U_g^*
\]
from $M=gK_I$ to itself.
Let $f=gh\in gK_I=M$. Recall that $U_g^*f=h$. Then
\beqa
 (Q_Nf)(z)&=&\Big((U_g A_{\overline{z}}^N U_g^*)f\Big)(z)=\Big((U_g A_{\overline{z}}^N h\Big)(z)=
 g(z)\frac{h(z)-\sum_{m=0}^{N-1}\widehat{h}(m) z^m}{z^N}\\
 &=&\frac{f(z)}{z^N}-\frac{g(z)}{z^N}
 \sum_{m=0}^{N-1} \widehat{h}(m) z^m.
\eeqa
Now since $Q_Nf\in M$, we get, from the weak convergence of $k_{\lambda_{n_{j}}}^{M} \to k_{\zeta}^{M, \Lambda'}$ as $j \to \infty$, 
\begin{equation} \label{QNf}
  \langle Q_Nf, k_{\zeta}^{M,\Lambda'}\rangle
 =\lim_{j\to\infty}  (Q_Nf)(\lambda_{n_j})
 =\lim_{j\to\infty}\left(\frac{f(\lambda_{n_j})}{\lambda_{n_j}^N}-\frac{g(\lambda_{n_j})}
  {\lambda_{n_j}^N}
 \sum_{m=0}^{N-1}\widehat{h}(m)\lambda_{n_j}^m \right).
\end{equation}
The last sum appearing above is a harmless polynomial converging to some 
number when $j\to\infty$. Also $\lambda_{n_j}\to \zeta$
and $g(\lambda_{n_j})\to 0$ when $j\to\infty$. Hence
the limit in \eqref{QNf} is equal to
\[
  \langle Q_Nf, k_{\zeta}^{M,\Lambda'}\rangle
  =\frac{f_{\zeta}^{\Lambda'}}{\zeta^N},
\]
where 
\[
 f_{\zeta}^{\Lambda'}=\langle f, k_{\zeta}^{M,\Lambda'}\rangle,
\]
which exists by construction.

Since $$\|Q_{N} f\| = \|U_{g} A_{\overline{z}}^{N} h\| = \|g A_{\overline{z}}^{N} h\|  = \|A_{\overline{z}}^{N} h\|  \to 0 \quad \mbox{as $N \to \infty$},$$
 we have
\[
  \lim_{N\to\infty}\frac{f_{\zeta}^{\Lambda'}}{\zeta^N}
 =\lim_{N\to\infty} \langle Q_Nf, k_{\zeta}^{M,\Lambda'}\rangle
 =0
\]
which is possible precisely when $f_{\zeta}^{\Lambda'}=0$.

The above construction is independent of the choice of the sequence $\{\lambda_n\}_{n \geq 1}$
so that for every such sequence there is a subsequence $\{\lambda_{n_j}\}_{j \geq 1}$ such
that $f(\lambda_{n_j})\to 0$ when $j\to\infty$. As in the proof of Theorem
\ref{thmACbis} we conclude that $\angle\lim_{\lambda\to \zeta}f(\lambda)$ exists
and is, in fact, equal to 0.

\end{proof}

\section{The dichotomy}

For a nearly invariant subspace 
$M = g K_{I}$, let 
$$N^{M} = \left\{\zeta \in \T: \angle \lim_{\lambda \to \zeta} f(\lambda) = 0 \;  \forall f \in M\right\}.$$
The sets $\ADC(I)$ and $N^M$ are not necessarily 
disjoint. If the function $g$ vanishes at a point $\zeta\in \ADC(I)$ then
automatically every function $f\in gK_I$ vanishes at $\zeta$ so that $\zeta\in N^M$.

Another situation is the following.
Observing that every function $f\in H^2$ satisfies the following 
well known growth condition
\bea\label{growth}
 |f(z)|=O\left(\frac{1}{\sqrt{1-|z|}}\right),
\eea
we see that whenever $|g(z)|=o(\sqrt{|z-\zeta|})$ in a Stolz domain $\Gamma_{\alpha}(\zeta)$
based on $\zeta$, 
 every function $gh$ tends non tangentially to $0$ at $\zeta$, and so $\zeta\in N^M$
independent of $I$. Notice that for growth in a Stolz domain $\Gamma_{\alpha}(\zeta)$ we can replace
``big-Oh'' by ``little-oh'' in \eqref{growth}, and then ``little-oh'' by ``big-oh'' in
the growth of $g$ in $\Gamma_{\alpha}(\zeta)$. The main theorem of this section is the following. 


\begin{theorem}\label{thmdicho}
Let $M$ be a non-trivial nearly invariant subspace with extremal function $g$ and
associated inner function $I$.
Suppose that every function in $gK_I$ has a finite non-tangential limit at $\zeta\in\T$.
Then $\zeta\in \ADC(I)$ or $\zeta\in N^M$.
\end{theorem}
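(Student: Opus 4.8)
The plan is to observe that the statement is essentially a repackaging of the two cases already appearing in the proof of Theorem~\ref{MNTL}, so most of the work is available for free. First I would invoke the necessity direction of Theorem~\ref{MNTL}: since by hypothesis every $f\in M=gK_I$ has a finite non-tangential limit at $\zeta$, conditions (1) and (2) of that theorem automatically hold. In particular $g$ (being extremal, hence in $M$) has a finite non-tangential limit at $\zeta$, and the family $\{k_\lambda^M:\lambda\in\Gamma_\alpha(\zeta)\}$ is uniformly norm bounded for every Stolz region $\Gamma_\alpha(\zeta)$. With these in hand I would split into two cases according to whether $\zeta\in\ADC(I)$. If $\zeta\in\ADC(I)$, the first alternative of the dichotomy holds and there is nothing more to prove; thus the whole content lies in showing that when $\zeta\notin\ADC(I)$ we necessarily land in $N^M$.

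So suppose $\zeta\notin\ADC(I)$. By the Ahern--Clark theorem (Theorem~\ref{thmACbis}) the kernels $k_\lambda^I$ fail to be uniformly bounded on some Stolz region $\Gamma_\alpha(\zeta)$, so there is a sequence $\{\lambda_n\}\subset\Gamma_\alpha(\zeta)$ with $\lambda_n\to\zeta$ and $\|k_{\lambda_n}^I\|\to\infty$. The key leverage is the factorization $\|k_\lambda^M\|=|g(\lambda)|\,\|k_\lambda^I\|$ coming from the corollary to Lemma~\ref{Lemma1.1}: since condition (2) keeps the left-hand side bounded along $\{\lambda_n\}$ while $\|k_{\lambda_n}^I\|\to\infty$, I would deduce that $g(\lambda_n)\to 0$. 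Combined with condition (1) this also forces the non-tangential limit of $g$ itself to be $0$, although only the vanishing along $\{\lambda_n\}$ is needed below.

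Finally I would reproduce, for an arbitrary $f=gh\in M$, the operator argument from Case~2 of the proof of Theorem~\ref{MNTL}. Using the uniform boundedness of $\{k_\lambda^M\}$ I pass to a subsequence along which $k^M_{\lambda_{n_j}}$ converges weakly to some $k_\zeta^{M,\Lambda'}$, and I consider $Q_N=U_gA_{\overline z}^NU_g^*$. Expanding $(Q_Nf)(\lambda_{n_j})$ produces $f(\lambda_{n_j})/\lambda_{n_j}^N$ minus a term carrying the factor $g(\lambda_{n_j})\to 0$ against a bounded partial Fourier sum of $h$; evaluating the weak limit then yields $\langle Q_Nf,k_\zeta^{M,\Lambda'}\rangle=f_\zeta^{\Lambda'}/\zeta^N$, where $f_\zeta^{\Lambda'}=\langle f,k_\zeta^{M,\Lambda'}\rangle$. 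Because $\|Q_Nf\|=\|A_{\overline z}^Nh\|\to 0$ as $N\to\infty$, letting $N\to\infty$ forces $f_\zeta^{\Lambda'}=0$; and since $f$ is already assumed to have a non-tangential limit at $\zeta$, that limit coincides with $f_\zeta^{\Lambda'}$ and hence equals $0$. As $f\in M$ was arbitrary, $\zeta\in N^M$. The only point requiring genuine care—and the sole obstacle—is the implication $g(\lambda_n)\to 0$ together with the observation that $g(\lambda_{n_j})/\lambda_{n_j}^N$ times the bounded polynomial dies in the limit; everything else is a direct transcription. Indeed, the real insight is simply that Case~2 of Theorem~\ref{MNTL} never used $\zeta\in\ADC(I)$ and actually delivers the limit value $0$, which is precisely the assertion $\zeta\in N^M$.
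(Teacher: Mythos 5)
Your proof is correct, and it reaches the result by a genuinely different route than the paper. You split on whether $\zeta\in\ADC(I)$ and, in the non-ADC case, re-run the $Q_N=U_gA_{\overline{z}}^NU_g^*$ argument from Case 2 of the proof of Theorem~\ref{MNTL}, feeding into it the two facts that the hypothesis of Theorem~\ref{thmdicho} delivers via the necessity half of Theorem~\ref{MNTL}: $g$ has a finite non-tangential limit and the kernels $k_\lambda^M$ are uniformly norm bounded on Stolz regions. Your key deduction --- that boundedness of $\|k^M_{\lambda_n}\|=|g(\lambda_n)|\,\|k^I_{\lambda_n}\|$ against $\|k^I_{\lambda_n}\|\to\infty$ forces $g(\lambda_n)\to 0$ --- is sound (and note $\lambda_n\to\zeta$ is automatic, since $\|k^I_{\lambda_n}\|\to\infty$ forces $|\lambda_n|\to 1$ inside the Stolz region); moreover your endgame is actually simpler than in Theorem~\ref{MNTL}, because the non-tangential limit of $f$ exists by hypothesis and therefore must equal the value $f_\zeta^{\Lambda'}=0$ obtained along the subsequence, with no need for the ``every sequence has a subsequence converging to a common limit'' step. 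The paper proceeds differently: it splits on whether $\angle\lim_{\lambda\to\zeta}g(\lambda)$ is zero or not. If nonzero, it divides by $g$, so every $h=f/g\in K_I$ inherits a finite non-tangential limit and Ahern--Clark gives $\zeta\in\ADC(I)$; if zero, it observes that the hypothesis (plus the uniform boundedness principle) produces a boundary reproducing kernel $k_\zeta^M$, and the formula $k^M_{\lambda_n}(z)=\overline{g(\lambda_n)}\,g(z)\,k^I_{\lambda_n}(z)$ with $\overline{g(\lambda_n)}\to 0$ and $z$ fixed forces $k_\zeta^M\equiv 0$, whence every boundary value vanishes. The paper's route is shorter and avoids repeating the operator machinery, and it records the sharper information that the alternative is decided by whether $\angle\lim g$ vanishes (which is what drives the closing corollary); your route buys the observation that Theorem~\ref{thmdicho} is essentially a by-product of the proof, not merely the statement, of Theorem~\ref{MNTL}.
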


Note again that the intersection $\ADC(I)\cap N^M$ can be non-empty.

\begin{proof}[Proof of Theorem \ref{thmdicho}]
If every function in $gK_I$ has a non-tangential limit at $\zeta\in \T$, then,
in particular, $g$ has a non-tangential limit at $\zeta$. As a reminder, recall that $g$ belongs to $g K_I$ since $I(0) = 0$ (and thus the constants belong to $K_I$).

If 
$$\angle \lim_{\lambda \to \zeta} g(\lambda) \not = 0,$$ then, since every function
$f=gh\in gK_I$ has a finite non-tangential limit at $\zeta$, we can divide by $g$ to
get $h=f/g\in K_I$ which has a non-tangential limit at $\zeta$. So, every function
in $K_I$ has non-tangential limit and we conclude from \cite{AC70} that $\zeta \in \ADC(I)$. 

Now suppose that
\bea\label{limg0}
 \angle \lim_{\lambda \to \zeta} g(\lambda) = 0.
\eea
First note that if every function in $gK_I$ has a non-tangential limit 
at $\zeta\in\T$, then, as already mentioned, there exists a reproducing
kernel $k_{\zeta}^M\in M$ at $\zeta$. Let now $\{\lambda_n\}_{n\ge 1}$
be any sequence with $\lambda_n\in \Gamma_{\alpha}(\zeta)$ tending to
$\zeta$. Then for $z\in\D$,
\[
 k_{\zeta}^M(z)=\langle k_{\zeta}^M,k_z^M\rangle=
 \overline{\langle k_z^M,k_{\zeta}^M\rangle}=\overline{k_z^M(\zeta)}
 =\lim_{n\to\infty}\overline{k_z^M(\lambda_n)}
 =\lim_{n\to\infty}\left( g(z)\overline{g(\lambda_n)}
  \frac{1-\overline{I(\lambda_n)}I(z)}{1-\overline{\lambda_n}z}\right)
\]
Since $z \in \D$ is fixed, by \eqref{limg0}, the above limit will be zero, which forces the kernel $k_{\zeta}^M$
to vanish identically. Thus for every $f\in M=gK_I$ we have
\[
 f(\zeta)=\lim_{n\to 0}\langle f,k^M_{\lambda_n}\rangle
 =\langle f,0\rangle=0.
\]
\end{proof}

As a consequence of this result, if $\zeta\in N^M\setminus \ADC(I)$,
then the point evaluation operator at $\zeta$ is just the zero-functional.
We state this observation as a separate result.

\begin{corollary}
Let $M$ be a non-trivial nearly invariant subspace with extremal function $g$ and
associated inner function $I$.
On $M$, the only non-zero point evaluation functionals at a point $\zeta\in\T$
are those for which $\zeta\in\ADC(I)$ and $g$ admits a non-tangential
limit different from zero at $\zeta$. 
\end{corollary}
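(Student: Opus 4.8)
The plan is to read the statement as a clean biconditional and to derive it almost entirely from the dichotomy already established in Theorem~\ref{thmdicho}, together with its proof. Throughout, speaking of ``the point evaluation functional at $\zeta$'' presupposes that $f(\zeta) := \angle\lim_{\lambda\to\zeta} f(\lambda)$ is defined for every $f \in M$, so I would first fix a point $\zeta \in \T$ at which every function in $M = gK_I$ has a finite non-tangential limit; this is precisely the standing hypothesis of Theorem~\ref{thmdicho}. The goal is then to show that the functional $f \mapsto f(\zeta)$ is non-zero if and only if $\zeta \in \ADC(I)$ and $\angle\lim_{\lambda\to\zeta} g(\lambda) \neq 0$.

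For the forward direction, suppose the functional is non-zero, i.e.\ there is some $f \in M$ with $f(\zeta) \neq 0$. This immediately gives $\zeta \notin N^M$. Applying the dichotomy of Theorem~\ref{thmdicho}, which asserts that $\zeta \in \ADC(I)$ or $\zeta \in N^M$, I conclude $\zeta \in \ADC(I)$. It remains to rule out $\angle\lim_{\lambda\to\zeta} g(\lambda) = 0$. Here I would reuse the computation in the second case of the proof of Theorem~\ref{thmdicho}: since $\zeta \in \ADC(I)$ the boundary kernel $k_\zeta^I$ is well defined, and if $g$ had non-tangential limit $0$ at $\zeta$ then $k_\zeta^M(z) = \overline{g(\zeta)}\, g(z)\, k_\zeta^I(z) \equiv 0$, whence $f(\zeta) = \langle f, k_\zeta^M\rangle = 0$ for every $f \in M$, contradicting that the functional is non-zero. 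Hence $\angle\lim_{\lambda\to\zeta} g(\lambda) \neq 0$.

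For the converse, assume $\zeta \in \ADC(I)$ and $\angle\lim_{\lambda\to\zeta} g(\lambda) \neq 0$. Since $I(0) = 0$ the constant function $1$ lies in $K_I$, so $g = g\cdot 1 \in M$, and the non-tangential limit of $g$ at $\zeta$ is non-zero by hypothesis. Thus $g$ itself witnesses that $f \mapsto f(\zeta)$ does not vanish identically on $M$. Implicitly, the two hypotheses also guarantee that every $f = gh \in M$ has non-tangential limit $g(\zeta)h(\zeta)$ at $\zeta$, so the functional is genuinely defined; this is exactly Case~1 of the proof of Theorem~\ref{MNTL}.

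I do not expect a serious obstacle, since all the analytic content sits in Theorem~\ref{thmdicho} and in the Ahern--Clark characterization of $\ADC(I)$, and the corollary merely packages those results. The one point requiring care is the borderline situation $\zeta \in \ADC(I)$ with $g(\zeta) = 0$: here $\zeta$ belongs to $\ADC(I)$ and yet the functional still vanishes, so membership in $\ADC(I)$ alone is not sufficient, and one must invoke the vanishing of $k_\zeta^M$ to kill the functional. Making explicit that this case is already covered by the $\angle\lim g = 0$ branch of the dichotomy proof is the step I would state most carefully.
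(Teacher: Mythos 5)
Your proof is correct and follows essentially the same route as the paper: the corollary is stated there as a direct consequence of Theorem~\ref{thmdicho}, and your argument packages exactly that — the dichotomy gives $\zeta\in\ADC(I)$ when the functional is non-zero, and the kernel computation from the $\angle\lim g=0$ branch of the theorem's proof disposes of the borderline case $\zeta\in\ADC(I)$ with $g(\zeta)=0$, while the converse is witnessed by $g$ itself.
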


\bibliography{gk2i}

\end{document}